\newcommand{\cat}{\cdot}
\renewcommand{\a}{\alpha}
\renewcommand{\b}{\beta}
\newcommand{\eps}{\varepsilon}
\newcommand{\f}{\varphi}
\renewcommand{\l}{\lambda}
\newcommand{\R}{\mathbb{R}}
\newcommand{\Q}{\mathbb{Q}}
\newcommand{\N}{\mathbb{N}}
\newcommand{\ov}{\overline}
\renewcommand{\r}{\ov\R}
\renewcommand{\Im}{\mathrm{Im}\,}
\newtheorem{lem}{Lemma}
\newtheorem{prop}{Proposition}
\title{Iteration of Exponentials with Sign Changes}
\author{
Pierre Mazet \thanks{piermazet@laposte.net}
\and
Emmanuel Halberstadt \thanks{Emmanuel Halberstadt passed away on September 22, 2021 and unfortunately could not participate in the final version of this article}
}
\date{}
\begin{document}

\maketitle

\abstract{
In this paper we consider the iteration of infinitely many signed exponentials with the same base but the signs may vary. We show that for every base in an explicit interval this iteration converges for any sequence of signs and all the real numbers are possible limits. We give some more results for a base outside this interval.

\bigskip

Dans cet article, nous considérons l’itération d’une infinité d’exponen\-tielles signées avec la même base mais les signes peuvent varier. Nous montrons que pour toute base dans un intervalle explicite, cette itération converge pour toute suite de signes et que tous les nombres réels sont des limites possibles. Nous donnons quelques résultats supplémentaires pour une base en dehors de cet intervalle.
}

\section{Introduction.}
\subsection*{The initial problem}
More than twenty years ago, our colleague Michel Lazarus\footnote{Michel Lazarus died on September 15, 2010, he was a lecturer at the Pierre et Marie Curie University (Paris VI).} 
asked the first author
the two questions below (we don't know if he had the answers and the origin of this problem). 
Given an infinite sequence $\eps=(\eps_1,\eps_2,\eps_3,\ldots)$ of $+$ and $-$ signs we form the following expression 
\[(E):\hskip1cm 
\eps_1e^{\textstyle \,\,\eps_2\,e^{\textstyle \,\,\eps_3\,\,e^{\textstyle\,\,
\eps_4\,e^{\ \textstyle \cdot{^{\ \ \textstyle \cdot^{\ \ \textstyle \cdot}}}}}}}
\]
Michel Lazarus then asked the following questions:
\begin{itemize}
\item[]$Q_1$: For a given $\eps$, does the expression $(E)$ represent an element of $\r$?
\item[] $Q_2$: For $t$ in the extended real line $\r$, can we represent $t$ by a suitable sequence  $\eps$?
\end{itemize}
\medskip
To clarify question $Q_1$ we introduce for each $n\in \N^*$ the expression
\begin{align}
  u_{n,\eps}=\eps_1\exp\bigg(\eps_2\exp\Big(\dots \big(\eps_n e\big)\Big)\bigg) \label{eq:u_n_eps}
\end{align}
obtained by truncating the expression $(E)$ at the $n$-th exponential.
Question $Q_1$ is then rewritten:

\begin{itemize}
\item[] $Q_1$: For given $\eps$, does the sequence $(u_{n,\eps})_n$ have a limit in $\r$?
\end{itemize}

\medskip
In section \ref{suitable} we prove  that the answer to these two questions is always positive.

\subsection*{The extended problem}
We also study the analogues of questions $Q_1$ and $Q_2$ when we replace $e$ by $e^a$ for some $a>0$ and we show that the answers depend now on the value of $a$.
In the following, we fix a real number $a>0$ and we place ourselves within the framework of this extended problem that we specify below.

\medskip
Let $f_+$ and $f_-$ denote the applications of $\r$ into itself defined by
\[f_+(x) = e^{ax} \qquad f_-(x) = -e^{ax}
  \]
with the usual conventions $e^{-\infty}=0$ and $e^{+\infty}=+\infty$.
For a sequence $\eps$ of $+$ and $-$ and an integer $n$ we denote by $f_{n,\eps}$ the composition
\[f_{n,\eps}:=f_{\eps_1}\circ f_{\eps_2}\circ \dots \circ f_{\eps_n}.
\]
In particular $f_{0,\eps}$ is the identical map. We then generalize expression~\eqref{eq:u_n_eps} by writing  $u_{n,\eps}=f_{n,\eps}(1)$ even when $a\neq 1$, omitting the dependence on $a$.

If the sequence $(u_{n,\eps})_n$ has a limit $t$ in $\r$ we say that $\eps$ {\bf represents} $t$. We say that an element $t$ of $\r$ is {\bf representable} if there exists (at least) a sequence $\eps$ that represents it.
Finally we say that $a$ is {\bf suitable} if any sequence $\eps$ represents an element of $\r$ and any element of $\r$ is representable, in other words if the answer to questions $Q_1$ and $Q_2$ is always positive.

\subsection*{Note.}
Much work has been done around iterated exponentials, See for instance ~\cite{EUL},~\cite{BAK1},~\cite{BAK2},~\cite{MIS},~\cite{SHELL},~\cite{THRON}. But we never found any article concerning the problem studied here.

\subsection*{Our results}

\begin{prop}\label{un}
The set of sequences  $\eps$ for which the sequence $(u_{n,\eps})_n$ does not converge (i.e.  such that $\eps$ does not represent any element of $\r$) is at most countable. The set of representable elements is therefore not empty.
\end{prop}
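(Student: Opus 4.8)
The plan is to study not the sequence $(u_{n,\eps})_n$ itself but the nested family of images $I_n(\eps):=f_{n,\eps}(\r)$. Since each of $f_+$ and $f_-$ is continuous and strictly monotone on the compact space $\r$, with $f_+(\r)=[0,+\infty]$ and $f_-(\r)=[-\infty,0]$, every $f_{n,\eps}$ is continuous and injective on $\r$ and its image $I_n(\eps)$ is a compact interval. From $I_{n+1}(\eps)=f_{n,\eps}\big(f_{\eps_{n+1}}(\r)\big)\subseteq f_{n,\eps}(\r)=I_n(\eps)$ we get a decreasing chain of compact intervals, whose intersection $I_\infty(\eps)$ is a nonempty compact interval. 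Next I would note that for every pair $n\ge m$ one may factor $f_{n,\eps}=f_{m,\eps}\circ(f_{\eps_{m+1}}\circ\cdots\circ f_{\eps_n})$, so $u_{n,\eps}=f_{m,\eps}(v)$ for some $v\in\r$ and hence $u_{n,\eps}\in I_m(\eps)$. Consequently, if $I_\infty(\eps)$ reduces to a single point, the diameters of the $I_m(\eps)$ tend to $0$ (routine nested–interval argument in the compact metric space $\r$) and $(u_{n,\eps})_n$ converges to that point. Therefore the set of $\eps$ for which $(u_{n,\eps})_n$ diverges is contained in
\[\mathcal{D}:=\{\,\eps : I_\infty(\eps)\text{ has more than one point}\,\},\]
and it suffices to prove that $\mathcal{D}$ is at most countable.

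The heart of the argument — and the only step I expect to require a genuine idea rather than routine verification — is the claim that the intervals $I_\infty(\eps)$, for $\eps\in\mathcal{D}$, have pairwise disjoint interiors. To see this, take $\eps\ne\eps'$ and let $n+1$ be the first index at which they differ, so that $\eps$ and $\eps'$ agree on their first $n$ terms and hence $f_{n,\eps}=f_{n,\eps'}=:F$. The sets $f_{\eps_{n+1}}(\r)$ and $f_{\eps'_{n+1}}(\r)$ are then the two intervals $[0,+\infty]$ and $[-\infty,0]$, which meet only at $0$; since $F$ is injective,
\[I_{n+1}(\eps)\cap I_{n+1}(\eps') = F\big(f_{\eps_{n+1}}(\r)\big)\cap F\big(f_{\eps'_{n+1}}(\r)\big) = \{F(0)\},\]
a single point, so a fortiori $I_\infty(\eps)\cap I_\infty(\eps')\subseteq\{F(0)\}$. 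Two intervals meeting in at most one point have disjoint interiors (their intersection of interiors is an open subset of a singleton, hence empty, as $\r$ has no isolated points), which is the claim.

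To conclude, observe that an interval with more than one point has nonempty interior in $\r$, and that interior contains a bounded open interval with rational endpoints; assigning to each $\eps\in\mathcal{D}$ such a pair of rationals inside $I_\infty(\eps)$ gives, by the disjointness just proved, an injection $\mathcal{D}\hookrightarrow\Q\times\Q$. Hence $\mathcal{D}$ is at most countable, and so is the set of sequences that represent no element of $\r$. Finally, the set $\{+,-\}^{\N^*}$ of all sign sequences is uncountable, so at least one $\eps$ lies outside $\mathcal{D}$ (indeed outside the set of non-representing sequences); such an $\eps$ represents some $t\in\r$, which shows that the set of representable elements is nonempty. The remaining details are all routine: the continuity, monotonicity and compactness facts about $f_\pm$ on $\r$, and the standard nested–interval convergence argument used to pass from "$I_\infty(\eps)$ is a point" to "$(u_{n,\eps})_n$ converges."
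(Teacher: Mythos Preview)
Your proof is correct and follows essentially the same approach as the paper: you re-derive inline what the paper states as Lemmas~\ref{l0} and~\ref{l1} (that non-convergence forces $I_\infty(\eps)$ to be nondegenerate, and that distinct $\eps$ give intervals with disjoint interiors), and then inject the bad set into a countable set via rationals. The only cosmetic difference is that you map into $\Q\times\Q$ using a rational subinterval while the paper picks a single rational in $I^o(\eps)$.
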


\begin{prop}\label{deux}
Suppose $a\le 1/e$. Then, any sequence $\eps$ represents an element of $\r$ (i.e., the sequence of $(u_{n,\eps})_n$ converges). Moreover, the set of representable elements of $\r$ is a closed set with empty interior in $\r$; $a$ is therefore not suitable.
\end{prop}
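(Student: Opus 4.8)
The plan is to exploit that $a\le 1/e$ forces $f_+$ to have a fixed point. Let $\beta$ be the smallest positive solution of $e^{ax}=x$; then $1<\beta\le e$, with $\beta=e$ precisely when $a=1/e$, and $a\beta\le 1$ with equality iff $a=1/e$. Both $f_+$ and $f_-$ map $[-\beta,\beta]$ into itself: $f_+([-\beta,\beta])=[e^{-a\beta},\beta]\subseteq(0,\beta]$ and $f_-([-\beta,\beta])=[-\beta,-e^{-a\beta}]\subseteq[-\beta,0)$, and on $[-\beta,\beta]$ we have $|f_\pm'(x)|=ae^{ax}\le a\beta\le 1$, with strict inequality for $x<\beta$. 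An easy induction gives $|u_{n,\eps}|\le f_+^{\,n-1}(e^a)<\beta$ for all $n,\eps$ (since $e^a<\beta$ and $f_+^{\,j}(e^a)\uparrow\beta$), so every orbit stays in a fixed compact subinterval of $\R$, and the limit, once it exists, is finite.

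For convergence I would bound $\delta_n:=|u_{n+1,\eps}-u_{n,\eps}|$. With $F_n=f_{\eps_1}\circ\dots\circ f_{\eps_n}$ one has $u_{n,\eps}=F_n(1)$, $u_{n+1,\eps}=F_n(f_{\eps_{n+1}}(1))$, both arguments in $[-e^a,e^a]$, so $\delta_n\le 2e^a\sup_{[-e^a,e^a]}|F_n'|$. Tracking intermediate values yields the $\eps$‑free bound $\sup_{[-e^a,e^a]}|F_n'|\le\prod_{j=0}^{n-1}b_j$ with $b_j:=ae^{a f_+^{\,j}(e^a)}$. Since $f_+^{\,j}(e^a)\uparrow\beta$ we get $b_j\to a\beta$: if $a<1/e$ this limit is $<1$ and $\sum_n\prod_{j<n}b_j$ converges geometrically; if $a=1/e$ then $b_j=e^{(f_+^{\,j}(e)-e)/e}$ and the parabolic asymptotics $f_+^{\,j}(e)-e\sim -2e/j$ (from $f_+(e+t)=e+t+t^2/(2e)+\dots$) give $\prod_{j<n}b_j\asymp n^{-2}$, still summable. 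Hence $\sum_n\delta_n<\infty$ uniformly in $\eps$; so each $(u_{n,\eps})_n$ is Cauchy with limit $L(\eps)\in[-\beta,\beta]$, and $u_{n,\cdot}\to L$ uniformly. As $u_{n,\cdot}$ depends only on $\eps_1,\dots,\eps_n$ it is continuous on the compact space of sign sequences, so $L$ is continuous and $R=L(\{+,-\}^{\N^*})$ is compact, hence closed in $\r$.

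For the empty interior, the key is that $R=f_+(R)\sqcup f_-(R)$ with $f_+(R)=R\cap(0,\infty)\subseteq[e^{-a\beta},\beta]$ and $f_-(R)=R\cap(-\infty,0)\subseteq[-\beta,-e^{-a\beta}]$ (disjoint, $0\notin R$). So the map $T\colon R\to R$ equal to $f_+^{-1}$ on $R\cap(0,\infty)$ and to $f_-^{-1}$ on $R\cap(-\infty,0)$ is well defined and $T(R)=R$. Suppose $R$ contains a nondegenerate open interval $J_0$. Since any connected subset of $[-\beta,-e^{-a\beta}]\cup[e^{-a\beta},\beta]$ lies in one of the two pieces, $J_0$ does; then $J_1:=T(J_0)$ is again an open interval inside $R$, lying in one piece, and inductively we obtain open intervals $J_k\subseteq R$ with $J_{k+1}=T(J_k)$, each inside $\pm[e^{-a\beta},\beta]$. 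On $J_k$ one has $|(f_\pm^{-1})'(x)|=1/(a|x|)\ge 1/(a\beta)\ge 1$, so $|J_{k+1}|\ge|J_k|$. When $a<1/e$ this is a genuine expansion, $|J_{k+1}|\ge(a\beta)^{-1}|J_k|$ with $(a\beta)^{-1}>1$, forcing $|J_k|\to\infty$ — impossible since $J_k\subseteq[-\beta,\beta]$. When $a=1/e$ ($\beta=e$) write $|J_{k+1}|-|J_k|=\int_{J_k}\bigl(\tfrac{e}{|x|}-1\bigr)\,dx=\varphi(\alpha_k,\gamma_k)$, where $\alpha_k<\gamma_k$ are the absolute values of the endpoints of $J_k$; they satisfy $e^{-1}\le\alpha_k<\gamma_k\le e$ and $\gamma_k-\alpha_k=|J_k|\ge|J_0|$ (lengths are nondecreasing), a compact set of parameters on which the continuous function $\varphi$ is strictly positive, hence bounded below by some $c>0$; again $|J_k|\to\infty$, a contradiction. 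Therefore $R$ has empty interior, and since a closed set with empty interior cannot be all of $\r$, $a$ is not suitable.

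The main obstacle is the borderline value $a=1/e$: there $\beta=e$ is a parabolic fixed point ($f_+'(e)=1$), so there is neither a uniform contraction (needed in the convergence step) nor a uniform expansion of $T$ (needed in the empty‑interior step). Both are rescued by the same parabolic rate $f_+^{\,n}(x)-e\asymp -1/n$, which makes $\prod_{j<n}b_j$ decay like $n^{-2}$ and which underlies the compactness argument bounding $|J_{k+1}|-|J_k|$ away from $0$.
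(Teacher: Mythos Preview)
Your argument is correct, and it follows a route genuinely different from the paper's. The paper never proves a Cauchy estimate; instead it introduces the set $X=\bigcup_\gamma f_\gamma\bigl((\beta,+\infty]\bigr)$ (in the paper's notation $\beta=m$), shows $X$ is dense in $\r$ by an interval--expansion argument under $f_\pm^{-1}$ similar in spirit to your empty--interior step, and then uses the interval machinery $I(\eps)$: if $I(\eps)$ is not a singleton, density of $X$ forces $\eps$ to be eventually all $+$, hence convergent. The paper then identifies the representable set \emph{exactly} as the complement of $X$ and proves $X$ is open via a second combinatorial description of $X$, yielding closedness and empty interior at once.

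Your approach trades this structural information for a direct analytic argument: the derivative--product bound gives uniform Cauchy convergence (with the parabolic rate $\prod_{j<n}b_j\asymp n^{-2}$ at $a=1/e$), compactness of $\{+,-\}^{\N^*}$ plus uniform convergence gives closedness, and the expanding inverse--branch map $T$ gives empty interior. This is more self--contained (it bypasses Lemmas~\ref{l0}--\ref{l2} and the set $X$), but it does not deliver the paper's explicit description of the representable set as $\r\setminus X$. Conversely, the paper avoids the delicate $a=1/e$ asymptotics you need twice: its density argument for $X$ uses only $|f_\pm'|<1$ on $(-\beta,\beta)$, and openness of $X$ is purely combinatorial.
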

In the proof of Proposition~\ref{deux} we give a more precise description of the set of representable elements.

\begin{prop}\label{trois}
When $a>e$ there are sequences $\eps$ for which the sequence $u_{n,\eps}$ converges and others for which it diverges.

Likewise there are elements of $\r$ which are representable and others which are not.
In particular $a$ is not suitable.
\end{prop}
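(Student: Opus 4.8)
The plan is to exploit the fact that for $a > e$ the map $x \mapsto e^{ax}$ has no fixed point (since $e^{ax} > x$ for all $x$, as $a > e$ forces the line $y = x$ to lie strictly below $y = e^{ax}$), and more importantly that it is strongly expanding on the relevant region. First I would locate a ``blow-up'' interval: concretely, fix a threshold $M > 0$ large enough that $e^{ax} > $ (something that iterates to $+\infty$) for $x \ge M$; the cleanest choice is to find $M$ with $a M > $ a value whose forward orbit under $f_+$ tends to $+\infty$, which is automatic once $aM > M$, i.e. $M$ arbitrary positive works after one step since $a>e>1$. So any $x$ with $x \ge 1$ already has $f_+^{\,k}(x) \to +\infty$. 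The point is instead to run this \emph{backwards} through the truncations: $u_{n,\eps} = f_{\eps_1}(u_{n-1,\eps}^{\,\shortmid})$ where the inner value is $f_{n-1,(\eps_2,\eps_3,\ldots)}(1)$. I would pick the all-plus sequence $\eps = (+,+,+,\ldots)$: then $u_{n,\eps} = f_+^{\,n}(1)$, and since $f_+(1) = e^a > e^e > 1$ and $f_+$ is increasing with $f_+(x) > x$ for $x \ge 1$, the sequence $(u_{n,\eps})_n$ is strictly increasing and unbounded, hence $u_{n,\eps} \to +\infty$. That gives a \emph{convergent} (in $\r$) example with limit $+\infty$; so $+\infty$ is representable.

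Next I would produce a \emph{divergent} sequence. The idea is to choose signs so that the partial compositions oscillate between a large positive value and a large negative value without settling. Take $\eps_1 = -$, and for the tail choose signs so that $f_{(\eps_2,\eps_3,\ldots),\,n-1}(1)$ alternates appropriately: concretely, I would build $\eps$ recursively so that the truncations $u_{n,\eps}$ take values alternately above $+e$ and below $-e$. The mechanism: if the ``inner'' partial value $y := f_{n-1,(\eps_2,\ldots)}(1)$ is large positive, then $f_-(y) = -e^{ay}$ is very large negative, and conversely feeding a large negative number into $f_+$ gives a value near $0$, into $f_-$ gives a value near $0^-$, etc. The subtlety is that truncation shifts the whole sign string, so I cannot just ``alternate signs''; instead I would argue by a counting/pigeonhole or direct construction: there are uncountably many sign sequences, but by Proposition~\ref{un} only countably many fail to converge, so \emph{a priori} that does not immediately give me a divergent one. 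Therefore I must exhibit one explicitly. I would do this by controlling the \emph{first} sign and the behaviour of the inner truncation: choose blocks of $+$'s of rapidly increasing length separated by single $-$'s, i.e. $\eps = (-,+^{k_1},-,+^{k_2},-,\ldots)$ with $k_j \to \infty$ fast; analyze how $u_{n,\eps}$ behaves as $n$ runs through such a block and show the values do not converge (they get pushed to $+\infty$ along one subsequence of $n$ and are bounded — or pushed toward $0$ — along another), using monotonicity of $f_\pm$ on the regions involved and the super-exponential growth.

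For the statement about representability, I would show that some real $t$ is \emph{not} representable. Here the key structural fact is that $f_+$ maps $\r$ into $[0,+\infty]$ and $f_-$ maps $\r$ into $[-\infty,0]$, so $u_{n,\eps}$ has sign $\eps_1$ for every $n \ge 1$; hence the limit, if it exists, has sign matching $\eps_1$ (weakly), so at worst this only rules out... not enough. The real constraint comes from the second coordinate: $u_{n,\eps} = f_{\eps_1}(f_{\eps_2}(\cdots))$ and $f_{\eps_2}(\cdots) \in [0,\infty]$ or $[-\infty,0]$ depending on $\eps_2$, so $u_{n,\eps} \in f_{\eps_1}([0,\infty]) = [1,\infty]$ if $\eps_2 = +$, or $\in f_{\eps_1}([-\infty,0]) = (0,1]$ if $\eps_2 = -$ (taking $\eps_1 = +$; symmetric for $\eps_1 = -$). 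Iterating this I would show that the set of possible values of $u_{n,\eps}$ for $n \ge N$, as $\eps$ ranges over all sequences with a fixed prefix, is constrained to a union of intervals \emph{bounded away from} certain ``gap'' values — in particular, since $a > e$, the ascending fixed-point structure is absent and the intervals $[1, e^a]$, $[e^a, e^{a e^a}]$, etc., are separated by genuine gaps $(e,?)$ below which nothing can land; I would pin down one explicit $t$ (for instance a value strictly between $0$ and $1$ that is not in $f_+((0,1])$, i.e. $t \in (0, e^{-a})$ with additional constraints propagated backwards, or more simply a value like $t = 2$ if $2$ falls in a gap when $a$ is large) lying in such a gap at every level, hence unreachable. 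The main obstacle I anticipate is precisely this backward gap analysis: showing that the constraints from successive truncations do not eventually fill in the gap, which requires a careful monotone-interval bookkeeping argument (essentially iterating the two branches of the inverse and showing the reachable set is a nowhere-dense-like union of intervals missing a fixed point), together with making the divergence construction genuinely robust against the sign-shift caused by truncation.
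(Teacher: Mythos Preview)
Your plan misses the two key shortcuts the paper uses, and the parts you do sketch are left genuinely incomplete.

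\textbf{Divergent example.} You look at $f_+$ and then try to engineer oscillation with blocks $(-,+^{k_1},-,+^{k_2},\ldots)$; you yourself flag that the sign-shift under truncation makes this delicate, and you do not finish the argument. The paper instead looks at $f_-$. The map $f_-(x)=-e^{ax}$ has a \emph{unique} fixed point $m<0$ (since $x\mapsto x+e^{ax}$ is strictly increasing), and $f_-'(m)=-ae^{am}=am$. One checks $m<-1/a$ exactly when $-1/a+e^{-1}>0$, i.e.\ when $a>e$; hence $|f_-'(m)|>1$ and $m$ is repulsive. For the constant word $\delta'=(- - -\cdots)$ the shift is trivial, so $u_{n+1,\delta'}=f_-(u_{n,\delta'})$; any limit in $\r$ would have to be a fixed point of $f_-$, hence $m$, but repulsiveness forces the orbit to be eventually (and then, by injectivity of $f_-$, identically) equal to $m$, contradicting $u_{0,\delta'}=1\neq m$. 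This gives the divergent sequence in one line, with none of the block bookkeeping you anticipate.

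\textbf{Non-representable elements.} Your ``backward gap analysis'' is unnecessary. Once you have \emph{any} word $\eps$ with $(u_{n,\eps})_n$ divergent, Lemma~\ref{l0} forces $I(\eps)$ to have nonzero length, so $I^o(\eps)\neq\emptyset$. Lemma~\ref{l1'} then says every element of $I^o(\eps)$ can only be represented by $\eps$ itself; since $\eps$ represents nothing, every point of $I^o(\eps)$ is non-representable. You already have these lemmas available---use them.

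\textbf{Convergent example / representable elements.} Your all-plus argument is fine (it gives $+\infty$), but note that Proposition~\ref{un} already guarantees uncountably many convergent sequences, so even this step requires no work.

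In short: switch from $f_+$ to $f_-$ for the divergent example, and replace the gap analysis by a direct appeal to Lemmas~\ref{l0} and~\ref{l1'}.
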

We could not get much more specific.

\begin{prop}\label{quatre}
The equation $(x+1)x^{-1/(x+1)}=e$ has a unique solution between $0$ and $1$ that we denote by $A$.

When $A<a\le e$ any sequence $\eps$ represents an element of $\r$ and any element of $\r$ is representable; $a$ is therefore suitable. In particular, this is the case for the initial problem.
\end{prop}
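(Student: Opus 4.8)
We first isolate the analytic core of the statement and then indicate how we plan to establish it.

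\textbf{Reduction to a shrinking‑of‑images statement.} For a finite sign word $w=(\eps_1,\dots,\eps_n)$ put $f_w=f_{\eps_1}\circ\cdots\circ f_{\eps_n}$ and $K_w=f_w(\r)$, a compact subinterval of $\r$. Since $f_+(\r)=[0,+\infty]$ and $f_-(\r)=[-\infty,0]$, with $[0,+\infty]\cup[-\infty,0]=\r$, one gets for every $w$
\[ K_w = K_{w+}\cup K_{w-},\qquad K_{w+}\cap K_{w-}=\{f_w(0)\}, \]
where $w\pm$ denotes $w$ followed by one more sign. The plan is to prove the single statement
\[ (\star)\qquad \operatorname{diam}\bigl(f_{n,\eps}(\r)\bigr)\longrightarrow 0\quad (n\to\infty)\ \text{ for every sign sequence }\eps, \]
the diameter being measured in a fixed metric making $\r$ compact. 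This suffices. Indeed $u_{n,\eps}=f_{n,\eps}(1)\in f_{n,\eps}(\r)=K_{(\eps_1,\dots,\eps_n)}$, and these intervals decrease, so $(\star)$ forces $(u_{n,\eps})_n$ to be Cauchy in $\r$, hence convergent: every $\eps$ represents an element of $\r$. For the second half, fix $t\in\r$ and use the displayed identity repeatedly to choose signs so that $t\in K_{(\eps_1,\dots,\eps_n)}$ for all $n$ (at each stage $t$ lies in at least one of the two halves of the current interval). Then $t$ and $u_{n,\eps}$ both lie in $K_{(\eps_1,\dots,\eps_n)}$, whose diameter tends to $0$, so $u_{n,\eps}\to t$ and $t$ is representable. (The initial value $1$ plays no role here.)

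\textbf{Structural simplifications of $(\star)$.} Note first that $A>1/e$ (evaluating the equation at $1/e$ gives a value $>e$, and its left–hand side is decreasing), so the hypothesis gives $a>1/e$; hence $f_+$ has no fixed point and $f_+(x)>x$ everywhere. If $\eps$ contains only finitely many $-$ signs, then for large $n$ one has $f_{n,\eps}=F\circ(\text{a finite composite of copies of }f_+)$ for a fixed continuous injection $F$, and $(\text{finite composite of }f_+)(\r)=[c_n,+\infty]$ with $c_n\to+\infty$, so $f_{n,\eps}(\r)$ shrinks to a point and $(\star)$ holds. If $\eps$ has infinitely many $-$ signs, a possible block of leading $+$'s only prepends a fixed finite composite of copies of $f_+$, which is uniformly continuous on the compact $\r$ and so does not affect $(\star)$; we may therefore assume $\eps=-\,+^{a_1}-\,+^{a_2}-\cdots$ with integers $a_j\ge 0$. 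Set $g_k:=f_-\circ(\text{the }k\text{-fold composite of }f_+)$ and $b_0=0$, $b_{i+1}=e^{a b_i}$. A direct computation gives $g_k\bigl([-\infty,0]\bigr)=[-b_{k+1},-b_k]\subset[-\infty,0]$, so each $g_k$ preserves $[-\infty,0]$ and, once $n$ is large enough that the first $m$ minus signs have been used, $f_{n,\eps}(\r)\subseteq g_{a_1}\circ\cdots\circ g_{a_m}\bigl([-\infty,0]\bigr)=:G_m$ with $m\to\infty$. Thus $(\star)$ is reduced to: $\operatorname{diam}(G_m)\to 0$ for every sequence $(a_j)_{j\ge 1}$ of non‑negative integers.

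\textbf{The core estimate.} This is where the hypotheses $a\le e$ and $a>A$ enter and where the real work lies. Three facts drive the argument. (i) Each of $f_+,f_-$ has constant negative Schwarzian derivative $-a^2/2$, hence so does every $g_k$ and every composite of them; this yields the minimum principle for $|(\cdot)'|$ and Koebe‑type distortion bounds. (ii) The hypothesis $a\le e$ is exactly the condition that the unique fixed point $p<0$ of $f_-$ satisfy $|f_-'(p)|\le 1$ (in fact $|f_-'(p)|$ is the Lambert value $W(a)$); together with the negative Schwarzian this gives $f_-^{\,\circ m}([-1,0])\downarrow\{p\}$, including the borderline $a=e$, where $p$ is parabolic and the shrinking is only of order $1/m$ but still occurs, there being no period‑two point of $f_-$ in $[-1,0]$. (iii) Large blocks are harmless: if $a_{j+1}$ is large, then $g_{a_j}$ is applied to the very negative interval $[-b_{a_{j+1}+1},-b_{a_{j+1}}]$, and $f_+$ carries such an interval onto a minute neighbourhood of $0$, after which the remaining maps compress it further; so only finitely many ``large'' blocks occur along any sequence and may be discarded. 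It remains to treat sequences with uniformly bounded blocks. Here the plan is to exhibit a Lyapunov function on $[-\infty,0]$ — equivalently, a metric equivalent to the standard one — with respect to which every $g_k$ with $k$ bounded is a weak contraction, strict along orbits unless one is trapped in a periodic cycle of nondegenerate intervals $g_{a_1}\circ\cdots\circ g_{a_r}(J)=J$. Such a cycle is the obstruction; it occurs when the associated composite has a repelling rather than a weakly attracting fixed point, and analysing this while eliminating the auxiliary variables is expected to collapse to the scalar equation $(x+1)x^{-1/(x+1)}=e$ of the statement, whose solution in $(0,1)$ is $A$. For $a>A$ no such cycle exists, so $\operatorname{diam}(G_m)\to 0$, proving $(\star)$ and hence the proposition.

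\textbf{The main obstacle.} The two reduction paragraphs are routine; the difficulty is entirely the third — obtaining a contraction estimate uniform over all block sequences simultaneously, pinning the sharp threshold (which we expect to be exactly $A$, arising from the worst periodic block pattern), and handling the degenerate case $a=e$ where the contraction of $f_-$ near $p$ is only polynomial. I anticipate this will require careful quantitative work combining the negative‑Schwarzian distortion bounds with an explicitly constructed adapted metric on $[-\infty,0]$.
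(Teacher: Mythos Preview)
Your two reduction paragraphs are sound and amount to the paper's Proposition~\ref{A} together with Lemmas~\ref{l0}--\ref{l2}: $a$ is suitable iff every $I(\eps)$ collapses to a point, and the idea of measuring lengths in an adapted metric so that $f_\pm$ become contractions is exactly the paper's Lemma~\ref{l6}. The gap is that you stop at the plan. The paper \emph{executes} this step by writing down explicit weight functions $\varphi$ and checking the single inequality $\varphi(e^{ax})\ge ae^{ax}\varphi(x)$ by elementary calculus. Two choices suffice: $\varphi(t)=1+\lambda t^2$ (with $\lambda$ tuned to $a$) covers all of $[1,e]$, and $\varphi(t)=\max(1,|at|^\nu)$ with $\nu=1+1/a$ covers $[A,B]$ with $B=1/A$. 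For the second choice the verification of the inequality splits into two one-variable optimisations whose extrema occur at $y=-\nu$ and $y=\nu/(\nu-1)$, and plugging in $\nu=1+1/a$ makes both constraints read $(a+1)a^{-1/(a+1)}\le e$ --- this is where $A$ comes from, not from a periodic-orbit analysis.

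Two specific points where your outline would lead you astray. First, the detour through block decompositions $g_k=f_-\circ f_+^{\circ k}$, Schwarzian distortion, and a search for obstructing periodic interval cycles is unnecessary and substantially harder than the paper's direct verification; once one has a $\varphi$ with $\varphi(e^{ax})\ge ae^{ax}\varphi(x)$, a single change of variable shows every $f_\pm$ weakly contracts the $\varphi$-length, and the maximal-length argument (your Lyapunov idea, made concrete) finishes in a few lines. Second, you write that you ``expect the sharp threshold to be exactly $A$, arising from the worst periodic block pattern.'' The paper makes no such claim: $A$ is merely the boundary of what this particular family of $\varphi$'s can reach, and the authors explicitly conjecture that the true threshold is $1/e$, leaving the interval $(1/e,A]$ open. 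So do not aim your argument at proving $A$ is optimal --- you would be trying to prove something likely false.
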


We have $A\sim 0.394$. Since $1/e$ is approximately $0.368$,  there remains a small range of values of $a$ for which we have not been able to solve the problem but it seems very likely to us that for these values $a$ is still suitable.

\section{Our method and first results}

\medskip
We identify a sequence $\eps=(\eps_1,\eps_2,\eps_3,\ldots)$ with the infinite word $\eps_1\,\eps_2\,\eps_3\,\ldots$ formed with the letters $+$ and
$-$.
More generally, we consider words, finite or not, over the alphabet $\{+,-\}$. The concatenation of a finite word $\a$ and a finite or infinite word $\b$ is denoted $\a\cat\b$ .

If $\a=\a_1\a_2\dots$ is an infinite word, we recall from the previous section the notation
\[f_{n,\a}=f_{\a_1}\circ f_{\a_2}\circ \dots \circ f_{\a_n}\ .
\]
We extend this notation to the case where $\a$ is a finite word of length at least $n$ and, if $\a$ has length $n$, we simply write
$f_\a$ for $f_{n,\a}$.
(Of course, if $n=0$, $f_{0,\a}$ is the identity on $\r$.)
Note that $f_{n,\a}$ is continuous and strictly monotone. It is increasing if $\a$ has an even number of $-$ signs among the $n$ first signs, and decreasing otherwise.
The extended problem is therefore the study of the convergence of the sequence of $u_{n,\eps}=f_{n,\eps}(1)$ and of the possible limits of these sequences.
This leads us to study the image of $f_{n,\eps}$. This image is a closed interval $I(n,\eps)=[x_{n,\eps},y_{n,\eps}]$ where
$x_{n,\eps}$ is the image of $-\infty$ (resp. $+\infty$) and $y_{n,\eps}$ is the image of $+\infty$ (resp . $-\infty$) if $f_{n,\eps}$ is increasing (resp. decreasing).

Note that we have $f_{n+1,\eps}=f_{n,\eps}\circ f_{\eps_{n+1}}$, which ensures the inclusion of $I(n+ 1,\eps)$ in $I(n,\eps)$. We thus have 
$x_{n,\eps}\leq x_{n+1,\eps}\leq y_{n+1,\eps}\leq y_{n,\eps}$. Hence, for $\eps$ a given infinite word, the sequence of $x_{n,\eps}$ is increasing and therefore has a limit $x(\eps)$, possibly infinite, that is the supremum in $\r$ of the $x_{n,\eps}$.
Similarly, $(y_{n,\eps})_n$ is a decreasing sequence whose limit  $y(\eps)$ is the infimum in $\r$ of the $y_{n,\eps}$.
We obviously have $x(\eps)\le y(\eps)$.
In other words the intervals $I(n,\eps)$ form a decreasing sequence of closed intervals whose intersection is the interval $[x(\eps),y(\eps)]$.

We set 
$I(\eps)=[x(\eps),y(\eps)]$, $I^o(\eps) =]x(\eps),y(\eps)[$ and  $I^o(n,\eps)=]x_{n,\eps},y_{n,\eps}[$.
If $I(\eps)$ is reduced to a singleton (i.e. $x(\eps)=y(\eps)$) then $I^o(\eps)$ is empty.

\smallskip\noindent
{\bf Note.}\quad The intersection $J$ of the $I^o(n,\eps)$ is not the interval $I^o(\eps)$. More precisely we can show that this only happens when $a>1/e$ and the word $\eps$ contains only a finite number of $-$ signs, in which case $J$ and $I^o( \eps)$ are empty. In the other cases, $x(\eps)$ or $y(\eps)$ is an extremity of $J$, and more frequently both.

\smallskip
If $\gamma\cat\eps$ is concatenation of a finite word $\gamma$ of length $k$
with $\eps$ we observe that
 $f_{k+n,\gamma\cat\eps}=f_\gamma\circ f_{n,\eps}$ and thus
$$I(k+n,\gamma\cat\eps)=f_\gamma\big(I(n,\eps)\big)\ ,\ I(\gamma\cat\eps)=f_\gamma\big(I(\eps)\big)\ ,\ I^o(\gamma\cat\eps)=f_\gamma\big(I^o(\eps)\big)\ .$$

\smallskip
Since, for any infinite word $\eps$ we have $x_{n,\eps}\le u_{n,\eps}\le y_{n,\eps}$, we conclude:
\begin{lem}\label{l0}
Let $\eps$ be an infinite word of signs.
If $\eps$ represents an element $t$ of $\r$, then $t$ belongs to the interval $I(\eps)$.
If $I(\eps)$ is reduced to a single element (i.e. $x(\eps)=y(\eps)$) then $\eps$ represents this element.
\end{lem}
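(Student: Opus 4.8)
\textit{Proof proposal.} The plan is to use the two monotone sequences $(x_{n,\eps})_n$ and $(y_{n,\eps})_n$ built in the preceding discussion together with the sandwich relation $x_{n,\eps}\le u_{n,\eps}\le y_{n,\eps}$ recalled just above the statement, everything being read in the order topology of $\r$ (so that non-strict inequalities pass to limits and $\pm\infty$ are allowed values).

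For the first assertion, I would assume that $\eps$ represents $t$, i.e.\ $u_{n,\eps}\to t$ in $\r$, and fix an integer $m$. For every $n\ge m$ the inclusion $I(n,\eps)\subseteq I(m,\eps)$ gives $u_{n,\eps}\in[x_{m,\eps},y_{m,\eps}]$, hence $x_{m,\eps}\le u_{n,\eps}\le y_{m,\eps}$. Letting $n\to\infty$ and using that $\{(u,v)\in\r\times\r:u\le v\}$ is closed, we get $x_{m,\eps}\le t\le y_{m,\eps}$. Since $x_{m,\eps}\uparrow x(\eps)$ and $y_{m,\eps}\downarrow y(\eps)$, letting $m\to\infty$ yields $x(\eps)\le t\le y(\eps)$, that is $t\in I(\eps)$.

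For the second assertion, I would assume $x(\eps)=y(\eps)=:t$ and conclude by a squeeze argument in $\r$: from $x_{n,\eps}\le u_{n,\eps}\le y_{n,\eps}$ and $x_{n,\eps}\to t$, $y_{n,\eps}\to t$ one gets $u_{n,\eps}\to t$. When $t$ is finite this is the usual squeeze theorem; when $t=+\infty$ it follows from $u_{n,\eps}\ge x_{n,\eps}\to+\infty$, and symmetrically when $t=-\infty$ from $u_{n,\eps}\le y_{n,\eps}\to-\infty$. Hence $\eps$ represents $t$.

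There is no genuine obstacle here: the argument is a direct combination of monotone convergence and squeezing. The only point that needs a little care is that we work in the extended real line, so these two classical facts have to be invoked in the order topology of $\r$ (including the cases $t=\pm\infty$) rather than in $\R$; this is routine.
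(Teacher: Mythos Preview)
Your proof is correct and follows exactly the approach the paper intends: the paper states this lemma immediately after the sentence ``Since, for any infinite word $\eps$ we have $x_{n,\eps}\le u_{n,\eps}\le y_{n,\eps}$, we conclude:'' and gives no further argument, so your squeeze/monotone-limit write-up is simply a careful unpacking of that one-line justification.
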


\begin{lem}\label{l1}
Let $\eps$ and $\eps'$ be two distinct infinite words. Then, for $n$ sufficiently large, the intersection $I(n,\eps)\cap I(n,\eps')$ is either empty or reduced to an element which is an extremity of each of these two intervals. The same holds for the intersection $I(\eps)\cap I(\eps')$.
Therefore, for $n$ large enough $I^o(n,\eps)\cap I(n,\eps')$ is empty as well as $I^o(\eps)\cap I(\eps') $.
\end{lem}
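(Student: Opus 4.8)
\emph{Proof idea.}\quad The plan is to reduce everything to the case where $\eps$ and $\eps'$ already disagree on their first letter, and there to exploit that $\Im f_+=[0,+\infty]$ and $\Im f_-=[-\infty,0]$ are two intervals meeting only at $0$.

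Since $\eps\neq\eps'$, the two words share a longest common prefix $\gamma$, a finite word of some length $k\ge0$, so we may write $\eps=\gamma\cat\eta$ and $\eps'=\gamma\cat\eta'$ with $\eta,\eta'$ infinite words starting with different letters; after possibly exchanging $\eps$ and $\eps'$, assume $\eta$ begins with $+$ and $\eta'$ with $-$. By the identities $I(k+n,\gamma\cat\delta)=f_\gamma\big(I(n,\delta)\big)$ and $I(\gamma\cat\delta)=f_\gamma\big(I(\delta)\big)$ recalled above, applied with $\delta=\eta$ and $\delta=\eta'$, we get $I(n,\eps)=f_\gamma\big(I(n-k,\eta)\big)$ and $I(n,\eps')=f_\gamma\big(I(n-k,\eta')\big)$ for $n\ge k$, together with $I(\eps)=f_\gamma\big(I(\eta)\big)$ and $I(\eps')=f_\gamma\big(I(\eta')\big)$. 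Now $f_\gamma$ is a strictly monotone continuous bijection of $\r$ onto its image, hence injective, and it carries each closed subinterval of $\r$ onto the closed interval whose extremities are the images of the extremities. Therefore $I(n,\eps)\cap I(n,\eps')=f_\gamma\big(I(n-k,\eta)\cap I(n-k,\eta')\big)$, a point that is an extremity of both $I(n-k,\eta)$ and $I(n-k,\eta')$ is mapped to a point that is an extremity of both $I(n,\eps)$ and $I(n,\eps')$, and likewise for the $I(\cdot)$ intervals. So it suffices to prove the lemma when $\gamma$ is empty, in which case "$n$ large enough" will simply mean $n\ge1$.

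Assume then that $\eps$ starts with $+$ and $\eps'$ with $-$. For every $n\ge1$, since the intervals $I(\cdot,\eps)$ decrease, $I(n,\eps)\subseteq I(1,\eps)=\Im f_+=[0,+\infty]$ (with the usual conventions on $e^{\pm\infty}$), and in particular $I(\eps)\subseteq[0,+\infty]$; symmetrically $I(n,\eps')\subseteq I(1,\eps')=\Im f_-=[-\infty,0]$ and $I(\eps')\subseteq[-\infty,0]$. Hence $I(n,\eps)\cap I(n,\eps')$ and $I(\eps)\cap I(\eps')$ are all contained in $[0,+\infty]\cap[-\infty,0]=\{0\}$. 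Moreover, if $0$ lies in $I(n,\eps)$ or in $I(\eps)$, then, that interval being contained in $[0,+\infty]$, its left extremity is both $\ge0$ and $\le0$, hence equals $0$; symmetrically, if $0$ lies in $I(n,\eps')$ or in $I(\eps')$, it is the right extremity of that interval. This gives the first two assertions of the lemma.

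For the last assertion, note that an extremity of $I(n,\eps)$ never belongs to the open interval $I^o(n,\eps)$ (which is empty in the degenerate case). Since $I^o(n,\eps)\cap I(n,\eps')\subseteq I(n,\eps)\cap I(n,\eps')$, which we have just shown is either empty or a single extremity of $I(n,\eps)$, it follows that $I^o(n,\eps)\cap I(n,\eps')=\emptyset$; the argument for $I^o(\eps)\cap I(\eps')=\emptyset$ is identical. I do not anticipate a genuine obstacle here: the only points that deserve attention are the bookkeeping with the conventions $e^{\pm\infty}$ at the endpoints and the elementary fact that a strictly monotone continuous bijection of $\r$ sends extremities of a subinterval to extremities of its image — it is precisely this fact that makes the reduction to a common prefix of length $0$ legitimate.
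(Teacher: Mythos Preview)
Your proof is correct and follows essentially the same approach as the paper's: locate the first index of disagreement, use that $\Im f_+=[0,+\infty]$ and $\Im f_-=[-\infty,0]$ meet only at $0$, and transport this through the injective monotone map $f_\gamma$ associated to the common prefix. The paper is marginally more direct---it computes $I(n_0,\eps)\cap I(n_0,\eps')=\{f_\alpha(0)\}$ in one stroke rather than first reducing to the empty-prefix case---but the content is identical.
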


\begin{proof}
Let $n_0$ denote the first index such that $\eps_{n_0}\not=\eps'_{n_0}$. By swapping $\eps$ and $\eps'$ if necessary, we can assume $\eps_{n_0}=+$ and $\eps'_{n_0}=-$ and
 write
$\eps=\a\cat+\cat\beta$ and $\eps'=\a\cat-\cat\beta'$, where $\a$ is a word of length $n_0-1$ (possibly empty if $ n_0=1$). We then have $I(n_0,\eps)=f_\a([0,+\infty])$ and
$I(n_0,\eps')=f_\a([-\infty,0])$. As $f_\a$ is monotone we conclude $I(n_0,\eps)\cap I(n_0,\eps')= \{f_\a(0)\}$; this intersection is indeed reduced to a single element which is an extremity of each of the intervals.

Since the interval $I(n_0,\eps)$ (resp. $I(n_0,\eps'))$ contains the interval $I(\eps)$ (resp. $I(\eps')$) and each of the intervals $I(n,\eps)$ (resp. $I(n,\eps')$) for $n\ge n_0$, the lemma follows.
\end{proof}

\begin{lem}\label{l1'}
Let $\eps$ be an infinite word of signs. Any element of $I^o(\eps)$ represented by an infinite word must be represented by $\eps$. Therefore, if
$I(\eps)$ is not reduced to a single element, then $I^o(\eps)$ contains at most one representable element and infinitely many non-representable elements.
\end{lem}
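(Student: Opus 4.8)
The plan is to deduce the first assertion directly from Lemmas~\ref{l0} and~\ref{l1}, and then to obtain the counting statement from the uniqueness of limits together with the fact that a non-degenerate open interval is infinite.

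First I would take any $t\in I^o(\eps)$ and assume it is represented by some infinite word $\eps'$. By Lemma~\ref{l0}, $t$ then belongs to $I(\eps')$, so $t\in I^o(\eps)\cap I(\eps')$ and this intersection is nonempty. But Lemma~\ref{l1} (in its final form, the statement about $I^o(\eps)\cap I(\eps')$ and not merely about finite truncations) says that for two \emph{distinct} infinite words this intersection is empty. Hence $\eps'=\eps$, that is, $t$ is represented by $\eps$ itself. This is exactly the first claim.

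Next, suppose $I(\eps)=[x(\eps),y(\eps)]$ is not reduced to a single element, i.e. $x(\eps)<y(\eps)$. Then $I^o(\eps)=\,]x(\eps),y(\eps)[\,$ is a nonempty open interval of $\r$, hence an infinite (indeed uncountable) set. By the first claim, every representable element lying in $I^o(\eps)$ must be represented by $\eps$. Since $(u_{n,\eps})_n$ is a sequence in $\r$ it has at most one limit, so $\eps$ represents at most one element of $\r$; this covers uniformly the case where $\eps$ diverges (then $\eps$, and hence $I^o(\eps)$, contains no representable element at all) and the case where $\eps$ converges to some $t$, whether or not $t$ lies in $I^o(\eps)$. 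Therefore $I^o(\eps)$ contains at most one representable element, and since it is infinite, it contains infinitely many non-representable ones.

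I do not expect a serious obstacle here: the argument is a short bookkeeping deduction from the two preceding lemmas. The only point demanding a little care is to make sure one invokes the \emph{extended} conclusions of Lemma~\ref{l1} — the assertions about $I(\eps)\cap I(\eps')$ and $I^o(\eps)\cap I(\eps')$ themselves, obtained there by passing to the limit over $n$ — rather than just the finite-stage statements, and to phrase the last step so that the possibility that $\eps$ does not converge is handled without a separate case.
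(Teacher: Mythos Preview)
Your proof is correct and follows essentially the same route as the paper: both take $t\in I^o(\eps)$ represented by some $\eps'$, note $t\in I^o(\eps)\cap I(\eps')$, and invoke Lemma~\ref{l1} to force $\eps'=\eps$. The only difference is that you spell out explicitly why the ``Therefore'' clause follows (uniqueness of limits and infiniteness of a non-degenerate open interval), which the paper leaves to the reader.
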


\begin{proof}
Let $t\in I^o(\eps)$. If $t$ is represented by an infinite word $\eps'$, then $t\in I^o(\eps)\cap I(\eps')$. This intersection is thus nonempty and the previous lemma ensures $\eps'=\eps$.
\end{proof}

\noindent{\bf Remark.}\quad This lemma shows that if the answer to question $Q_2$ is always positive (i.e. any element of $\r$ is representable) then all the $I(\eps)$ are reduced to a single element and Lemma~\ref{l0} ensures that the answer to question $Q_1$ is also always positive
(i.e. any infinite word $\eps$ represents an element of $\r$).

\begin{lem}\label{l2}
For every $t$ in $\r$ there is at least one and at most two infinite words $\eps$ such that $t\in I(\eps)$.
\end{lem}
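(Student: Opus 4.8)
The plan is to exploit a simple tiling fact: for each fixed length $n$, the $2^n$ intervals $I(n,\eps)$ --- which depend only on the first $n$ letters of $\eps$ --- cover $\r$ and have pairwise disjoint interiors. Granting this, both halves of the statement fall out quickly, so the real content is the tiling, which is itself elementary.

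First I would fix notation: for a finite word $w$ of length $n$, write $I(n,w)$ for the common value of $I(n,\eps)$ over all infinite $\eps$ with prefix $w$, so that $I(n,w)=f_w(\r)$, a nondegenerate closed interval because $f_w$ is continuous and strictly monotone (hence injective) and $\r$ has more than one point; also note $I(n+1,s\cat w)=f_s\big(I(n,w)\big)$ for $s\in\{+,-\}$. The covering property then follows by induction on $n$ (trivial for $n=0$) from $f_+(\r)=[0,+\infty]$ and $f_-(\r)=[-\infty,0]$: applying $f_+$ and $f_-$ to the length-$n$ covering gives $\bigcup_{|w|=n+1}I(n+1,w)=f_+(\r)\cup f_-(\r)=\r$. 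The disjointness of interiors is exactly the computation in the proof of Lemma~\ref{l1}: if $w\ne w'$ have the same length and first differ at index $n_0$, with common prefix $\a$ of length $n_0-1$, then $I(n_0,w)\cap I(n_0,w')=\{f_\a(0)\}$, hence $I(n,w)\cap I(n,w')\subseteq\{f_\a(0)\}$; so two distinct same-length intervals meet in at most one point, which, being the intersection of two nondegenerate closed intervals reduced to a point, is an endpoint of each.

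For the existence of at least one word, fix $t\in\r$ and look at the set of finite words $w$ with $t\in I(|w|,w)$. It is closed under taking prefixes (since $I(n+1,\cdot)\subseteq I(n,\cdot)$) and, by the covering property, contains a word of every length; being an infinite, finitely branching tree, it has an infinite branch $\eps$ by König's lemma, and then $t\in I(n,\eps)$ for all $n$, i.e. $t\in I(\eps)$. For ``at most two'', suppose $t$ lay in $I(\eps^{(1)})$, $I(\eps^{(2)})$ and $I(\eps^{(3)})$ for three pairwise distinct infinite words; choosing $n$ large enough that the length-$n$ prefixes $w_1,w_2,w_3$ are pairwise distinct, $t$ belongs to the three distinct intervals $I(n,w_i)$, so each of their pairwise intersections is nonempty and hence equals $\{t\}$, and $t$ is an endpoint of all three. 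Writing $I(n,w_i)=[a_i,b_i]$ with $a_i<b_i$, the relation $[a_i,b_i]\cap[a_j,b_j]=\{t\}$ forces $\max(a_i,a_j)=\min(b_i,b_j)=t$; after relabelling so that $t=a_1$ and $t=b_2$ (hence $b_1>t$ and $a_2<t$), the pair $(1,3)$ then forces $b_3=t$ and the pair $(2,3)$ forces $a_3=t$, so $a_3=b_3=t$, contradicting nondegeneracy.

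I expect the only genuinely delicate point to be this combinatorial endgame of ``at most two'' --- verifying that the putative three-interval configuration is truly impossible --- together with the small but essential observation, used throughout, that each $I(n,\eps)$ is nondegenerate, which is what upgrades ``the intersection is a single point'' to ``that point is a shared endpoint''. The covering induction and the König-lemma argument are routine.
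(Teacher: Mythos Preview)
Your proof is correct and follows essentially the same route as the paper: the paper also builds $\eps$ one letter at a time (your K\"onig's-lemma step is just the abstract packaging of that direct inductive extension), and for ``at most two'' it likewise reduces to three nondegenerate intervals pairwise meeting only at the common endpoint $t$, dismissing the contradiction as ``clearly impossible'' where you spell out the endpoint combinatorics. The only cosmetic difference is that you isolate the covering property $\bigcup_{|w|=n}I(n,w)=\r$ as a separate lemma, whereas the paper folds it into the one-step extension ($t=f_\a(u)$ with $u\ge0$ or $u\le0$).
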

\begin{proof}
Suppose that $t\in\Im(f_\a)$ for some word $\a$ of length $n$. We therefore have $t=f_\a(u)$ with $u$ in $\r$. If $u\in\Im(f_+)$ (i.e. $u\ge0$) we can write
$u=f_+(v)$ and therefore $t=f_{\a'}(v)$ with $\a'=\a\cat+$ . Similarly, if $u\le0$, we can write $t=f_{\a'}(v')$ with $\a'=\a\cat-$ . Thus $t\in\Im(f_{\a'})$ where $\a'$ is an extension of $\a$ of length $n+1$.
This allows to build by induction (from the empty word for which $\Im(f_\emptyset)=\r$) an infinite word $\eps$ for which $t$ belongs to all the $I(n,\eps )$ and therefore to $I(\eps)$.

Suppose then that $t$ belongs to $I(\eps)$, $I(\eps')$ and $I(\eps'')$ for three different infinite words. Then, for all $n$, $t$ belongs to each of the intervals $I(n,\eps)$, $I(n,\eps')$ and $I(n,\eps'')$. By Lemma~\ref{l1}, for $n$ sufficiently large, the intersection of two of these intervals is reduced to the element $t$ and $t$ is an extremity of each of these intervals. This is clearly impossible as each of these intervals is of non-zero length.
\end {proof}

An immediate consequence of these statements is the

\subsection*{Proof of Proposition~\ref{un}}

\begin{proof}
Let $X$ be the set of infinite words $\eps$ for which the sequence $(u_{n,\eps})_n$ has no limit. We have to prove that $X$ is at most countable.

For $\eps$ in $X$, Lemma~\ref{l0} implies that $I(\eps)$ is not reduced to a single element. This allows us to choose a rational number $r(\eps)$ in
$I^o(\eps)$. Lemma~\ref{l1} then ensures that, for $\eps$ and $\eps'$ two distinct elements of $X$, we have $r(\eps)\not=r(\eps')$; $r$ is therefore an injective mapping from $X$ to $\Q$ which proves the proposition.
\end{proof}

\begin{prop}\label{A}
For $a>0$ the following statements are equivalent:

(i) \ Any element of $\r$ is representable.

(ii) \ For any infinite word $\eps$, the interval $I(\eps)$ is reduced to a single element.

(iii) \ $a$ is suitable.
\end{prop}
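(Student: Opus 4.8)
The plan is to prove the three implications $(\mathrm{i})\Rightarrow(\mathrm{ii})\Rightarrow(\mathrm{iii})\Rightarrow(\mathrm{i})$, relying on the lemmas already established. The implication $(\mathrm{i})\Rightarrow(\mathrm{ii})$ is exactly the content of the Remark following Lemma~\ref{l1'}: if every element of $\r$ is representable but some $I(\eps)$ were not a singleton, then $I^o(\eps)$ would be a nonempty open interval, hence would contain uncountably many elements, yet by Lemma~\ref{l1'} at most one of them can be representable --- contradicting $(\mathrm{i})$. So I would simply recall this argument in one or two sentences.

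Next, $(\mathrm{ii})\Rightarrow(\mathrm{iii})$. Assume every $I(\eps)$ is a singleton. Then by Lemma~\ref{l0}, every infinite word $\eps$ represents the unique element of $I(\eps)$, so the answer to $Q_1$ is positive. For $Q_2$, take any $t\in\r$; by Lemma~\ref{l2} there is at least one infinite word $\eps$ with $t\in I(\eps)$, and since $I(\eps)=\{x(\eps)\}=\{y(\eps)\}$ is a singleton containing $t$, we get $t=x(\eps)$, which is represented by $\eps$ via Lemma~\ref{l0}. Hence $t$ is representable. Both $Q_1$ and $Q_2$ are answered positively, so $a$ is suitable.

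Finally $(\mathrm{iii})\Rightarrow(\mathrm{i})$ is immediate from the definition: "$a$ suitable" means in particular that any element of $\r$ is representable. This closes the cycle.

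I do not expect a serious obstacle here: the proposition is essentially a bookkeeping consequence of Lemmas~\ref{l0}, \ref{l1'} and \ref{l2}, and the only mild subtlety is the step $(\mathrm{i})\Rightarrow(\mathrm{ii})$, where one must remember that a nondegenerate interval $I^o(\eps)$ has uncountably (or at least infinitely) many points while Lemma~\ref{l1'} caps its representable points at one --- the same observation already made in the Remark. Everything else is unwinding definitions.
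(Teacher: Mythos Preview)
Your proposal is correct and follows essentially the same route as the paper: the cycle $(\mathrm{i})\Rightarrow(\mathrm{ii})\Rightarrow(\mathrm{iii})\Rightarrow(\mathrm{i})$ is established via the Remark after Lemma~\ref{l1'}, then Lemmas~\ref{l0} and~\ref{l2}, then the definition of suitable. There is nothing to add or correct.
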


\begin{proof}
The implication $(i)\Rightarrow(ii)$ results from the remark following Lemma~\ref{l1'}.
Under the hypothesis $(ii)$, Lemma~\ref{l0} shows that any word $\eps$ represents an element of $\r$; the conjunction of Lemmas $\ref{l2}$ and $\ref{l0}$ ensures that every element of $\r$ is representable. This proves $(ii)\Rightarrow(iii)$.
The implication $(iii)\Rightarrow(i)$ follows from the definition of suitable real numbers.
\end{proof}
\section{The case $a\le1/e$}

We now assume $0<a\le1/e$. Let us introduce the function $g$ defined by $g(x)=xe^{-ax}$. Its derivative is given by $g'(x)=e^{-ax}(1-ax)$. Consequently, when $x$ varies from $-\infty$ to $+\infty$, the function increases from $-\infty$ to a maximum reached for $x=1/a$ then decreases to $0$. This maximum is $1/ae \geq 1$. Hence, there exist two real numbers $m$ and $M$ (possibly equal if $a= 1/e$) such that $m\le1/a\le M$ and
$g(m)=g(M)=1$. We then have $e^{am}=m$ and $e^{aM}=M$, so that $m$ and $M$ are fixed points for $f_+$.

\smallskip
Let us start with the case of the infinite word $\delta=++++\cdots$ formed only with the sign $+$ . We have $I(n,\delta)=[x_{n,\delta},+\infty]$
($+\infty$ is a fixed point of $f_+$) with $x_{0,\delta}=-\infty$ and $x_{n+1,\delta}=\exp(a\,x_{ n,\delta})$. In particular $x_{1,\delta}=0$,
$x_{2,\delta}=1$ and therefore, for $n\ge2$, $ x_{n,\delta}=u_{n-2,\delta}$. We then see that the $x_{n,\delta}$ and the $u_{n,\delta}$ tend towards the first fixed point of $f_+$ that is to say $m$.
In conclusion, $I(\delta)=[m,+\infty]$ and the word $\delta$ represents $m$.

For any finite word $\gamma$, let $X_\gamma$ be the image of the interval $]m,+\infty]$ by $f_\gamma$ and let $X$ be the union of all $X_\gamma$.

We have $]m,+\infty]\subset [m,+\infty]=I(\delta)$, hence $X_\gamma=f_\gamma(\,]m,+\infty])\subset f_\gamma(I(\delta))=I(\gamma\cat\delta)$.
For $\gamma'$ finite word $f_{\gamma'}(X_\gamma)=f_{\gamma'}\circ f_\gamma(]m,+\infty])=f_{\gamma'\cat\gamma}(\,]m,+\infty])=X_{\gamma'\cat\gamma}$.
It follows that $f_{\gamma'}(X)$ is contained in $X$.
In particular we have $\,]m,+\infty]=X_\emptyset\subset X$, $f_-(]m,+\infty])=[-\infty,-m[\subset X$ and $ f_+(-\infty)=0\in X$.

\begin{lem}\label{l3}
Any interval of nonzero length meets $X$ (i.e. $X$ is a dense subset of $\r$).
\end{lem}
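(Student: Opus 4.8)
The plan is to use two facts already in hand. First, $X$ contains not only $]m,+\infty]$ and $[-\infty,-m[$ but, by applying $f_+$ and $f_-$ to $[-\infty,-m[\subset X$, also $[0,1/m[$ and $]-1/m,0]$, hence the interval $]-1/m,1/m[$; so $\r\setminus X\subset[1/m,m]\cup[-m,-1/m]$. Second, $X$ is stable under $f_+$ and $f_-$, hence pulling an interval back by the inverse of $f_+$ (defined on $]0,+\infty]$) or of $f_-$ (defined on $[-\infty,0[$) cannot destroy disjointness from $X$. It therefore suffices to show that any nondegenerate interval $J$ contained in $[1/m,m]$ or in $[-m,-1/m]$ must meet $X$, and I argue by contradiction, assuming $J\subset[1/m,m]$ and $J\cap X=\emptyset$.

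Since $f_+$ is an increasing bijection of $\r$ onto $[0,+\infty]$ with inverse $x\mapsto\tfrac1a\ln x$ on $]0,+\infty[$, and $[1/m,m]\subset\,]0,+\infty[$, put $J_1=f_+^{-1}(J)$. This is again a nondegenerate interval, $f_+(J_1)=J$, and $f_+(X)\subset X$ forces $J_1\cap X=\emptyset$, so $J_1\subset[1/m,m]\cup[-m,-1/m]$ and, being connected, lies entirely in one of them. Iterating — using $f_+^{-1}(x)=\tfrac1a\ln x$ while the current interval sits in $[1/m,m]$ and $f_-^{-1}(y)=\tfrac1a\ln(-y)$ while it sits in $[-m,-1/m]$ — I obtain nondegenerate intervals $J=J_0,J_1,J_2,\dots$, each lying in $[1/m,m]$ or in $[-m,-1/m]$, each disjoint from $X$, and with $h_{k+1}(J_{k+1})=J_k$ for the appropriate $h_{k+1}\in\{f_+,f_-\}$. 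In particular $|J_k|\le m-\tfrac1m$ for all $k$.

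The contradiction comes from the growth of $|J_k|$. In both branches $\bigl|(f_\pm^{-1})'(x)\bigr|=\tfrac1{a|x|}$, hence $|J_{k+1}|=\int_{J_k}\tfrac{dx}{a|x|}$. On $[1/m,m]\cup[-m,-1/m]$ one has $a|x|\le am=\ln m\le1$, using the defining equation $e^{am}=m$ and the fact that $a\le1/e$ forces $m\le e$ (indeed $g$ increases on $]{-\infty},1/a]$, which contains $e$, and $g(e)=e^{1-ae}\ge1=g(m)$). So the integrand is $\ge1$, $|J_k|$ is non-decreasing, and $|J_k|\ge\ell_0:=|J_0|>0$. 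On the sub-interval of $J_k$ of length $\ell_0/2$ adjacent to whichever endpoint of $J_k$ is nearer $0$, every point satisfies $|x|\le m-\ell_0/2$, whence $1-a|x|\ge(1-\ln m)+\tfrac{a\ell_0}2\ge\tfrac{a\ell_0}2$ and so $\tfrac1{a|x|}-1\ge\tfrac{a\ell_0}2$; integrating over that sub-interval gives $|J_{k+1}|-|J_k|\ge\tfrac{a\ell_0^2}4$. Hence $|J_k|\ge\ell_0+k\,\tfrac{a\ell_0^2}4\to+\infty$, contradicting $|J_k|\le m-\tfrac1m$. So some $J_N$ meets $X$, and since $h_1\circ\cdots\circ h_N$ maps $X$ into $X$, it carries a point of $J_N\cap X$ to a point of $J_0\cap X=J\cap X$ — a contradiction, which proves the lemma.

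The step I expect to be the main obstacle is this length estimate at the boundary value $a=1/e$: there $m=e$ is a parabolic fixed point of $f_+$ (its multiplier $am$ is $1$), so one has no uniform multiplicative expansion $|J_{k+1}|\ge\rho|J_k|$ with $\rho>1$; the argument survives only thanks to the additive gain $\tfrac{a\ell_0^2}4$, which is read off from the half of $J_k$ on which $|x|\le m-\ell_0/2$, i.e.\ where $\tfrac1{a|x|}$ already exceeds $1$ by a fixed amount. One should also verify the routine points that at each stage exactly one inverse is available (because $J_k$ lies strictly on one side of $0$), so the construction never terminates and the lengths $|J_k|$ genuinely become unbounded.
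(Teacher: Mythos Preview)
Your proof is correct. The key mechanism is the same as the paper's: on the region where the complement of $X$ can live, the maps $f_\pm$ contract lengths, so passing to preimages expands them, which is incompatible with a uniform bound. But the two arguments organize this differently.

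The paper works with \emph{maximal} gaps of $\r\setminus X$ inside $[-m,m]$: there are only finitely many of length $\ge\eta$, so one gap $J$ has maximum length; writing $J=f_\pm(K)$ and using $|f'_\pm|<1$ on $]-m,m[$ gives $|K|>|J|$ unless $|J|=0$, contradicting maximality in one step. Your argument instead fixes a single interval $J$ and iterates preimages $J_k$, then shows $|J_k|\to\infty$ against the bound $m-\tfrac1m$. This forces you to confront the parabolic endpoint $a=1/e$ explicitly (there $am=1$, so you get no uniform ratio $>1$), which you handle with the additive gain $|J_{k+1}|-|J_k|\ge a\ell_0^2/4$ coming from the half of $J_k$ closer to $0$. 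The paper sidesteps this entirely: a single strict inequality $|K|>|J|$ suffices to beat a maximum, so no quantitative lower bound on the expansion is needed.

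Your sharper localization $\r\setminus X\subset[1/m,m]\cup[-m,-1/m]$ (via $]-1/m,1/m[\subset X$) is a nice observation---the paper proves the same inclusion later, when showing $X$ is open---but for density the cruder containment $\r\setminus X\subset[-m,m]$ already does the job. The last paragraph of your write-up is slightly muddled (the contradiction is the unboundedness of $|J_k|$; the sentence about ``some $J_N$ meets $X$'' is superfluous), but the logic is sound.
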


\begin{proof}
Consider the nonempty intervals disjoint from $X$. By replacing such an interval by the union of the intervals that contain it but do not meet $X$, we obtain maximal intervals disjoint from $X$.
These intervals are pairwise disjoint since the union of two non-disjoint intervals is still an interval; these are the connected components of the complement of $X$.
As these intervals meet neither $[-\infty,-m[$ nor $]m,+\infty]$ they are contained in the interval $[-m,m]$ of length $2m$.
Since they are pairwise disjoint, for any $\eta>0$, there is only a finite number of them of length greater than $\eta$. It follows that there is one of maximum length, which we denote by $J$. Since $J$ does not meet $X$ it does not contain $0$ and therefore $J$ stays on the same side of $0$. Consequently, $J$ is the image of an interval $K$ by $f_+$ or $f_-$ according to whether $J$ remains positive or negative. Moreover, since $f_+(X)$ and $f_-(X)$ are contained in $X$, $K$ does not meet $X$. Consequently, the length of $K$ does not exceed that  of $J$ and $K\subset[-m,m]$. Now, on $]-m,m[$ we have $f_+'(x)=a\,e^{ax}<a\,e^{ am}=am\le 1$ (recall $m\le1/a$) and $f_-'(x)=-f_+'(x)$. It follows that, on $]-m,m[$, we have $|f'_+(x)|<1$ and $|f'_-(x)|<1$. The mean value theorem then ensures that the length of $K$ is zero or strictly greater than that of its image $J$. The last option being excluded, we conclude that $K$ is of zero length, i.e., reduced to a singleton. It is therefore the same for $J$ and for all the intervals considered since $J$ has a maximum length.
The lemma follows immediately.
\end{proof}

We can now give a

\subsection*{Proof of Proposition~\ref{deux}}

\begin{proof}
We first prove that any infinite word $\eps$ represents an element of $\r$.
It is obvious if $I(\eps)$ is reduced to a singleton by Lemma~\ref{l0}; therefore suppose $I(\eps)$ of non-zero length. Its interior $I^o(\eps)$ meets $X$, according to  Lemma~\ref{l3}. So there exists a finite word $\gamma$ such that $I^o(\eps)\cap X_\gamma\not=\emptyset$. As $X_\gamma\subset I(\gamma\cat\delta)$ we have \textit {a fortiori} $I^o(\eps)\cap I(\gamma\cat\delta)\not=\emptyset$ and Lemma~\ref{l1} ensures $\eps=\gamma\cat\delta$. As we have seen that $\delta$ represents $m$, it follows that $\eps$ represents $f_\gamma(m)$.
Thus, in all cases, $\eps$ represents an element of $\r$.
\medskip

We next prove that the set of representable elements of $\r$ is a
closed set with empty interior.  The proof will be done in several
steps, starting by proving that the set of representable elements of
$\r$ is the complement of $X$.

\paragraph{1. If $t\not\in X$ then $t$ is representable.}
By Lemma~\ref{l2} we can find $\eps$ such that $t\in I(\eps)$.
If $I(\eps)$ is reduced to a singleton, whence $I(\eps)=\{t\}$, then $t$ is represented by $\eps$ according to Lemma~\ref{l0}. Otherwise,
$I(\eps)$ meets $X$ by Lemma~\ref{l3}. In other words $I^o(\eps)$ meets $I(\gamma\cat\delta)$ for a finite word $\gamma$.
According to Lemma~\ref{l1}, this implies $\eps=\gamma\cat\delta$ and therefore $t\in I(\eps)=f_\gamma([m,+\infty])$. But then $t\not\in X$ implies $t=f_\gamma(m)$ and $t$ is represented by the word $\gamma\cat\delta$.

\paragraph{2. If $t\in X$ then $t$ is not representable.}
We therefore suppose $t=f_\gamma(u)$ for $\gamma$ a finite word and $u\in]m,+\infty]$. We show that $t$ is not represented by any infinite word. The proof will be done by induction on the length of $\gamma$.

If $\gamma$ is the empty word, then $f_\gamma$ is the identical map and $t=u\in]m,+\infty]$. Now,
$f_+([-\infty,m])=[0,m]\subset[-\infty,m]$ and $f_-([-\infty,m])=[-m,0]\subset [-\infty,m]$. In other words, the image of any element of $[-\infty,m]$ by
$f_+$ or $f_-$ is still in $[-\infty,m]$. As $[-\infty,m]$ contains 1 we conclude that, for any infinite word $\eps$ and any $n$, we have
$u_{n,\eps}=f_{n,\eps}(1)\in[-\infty,m]$. Passing to the limit we deduce that if $\eps$ represents an element $v$ we still have $v\in[-\infty,m]$. Hence no infinite word can represent $t$.

Suppose now that $\gamma$ has length $k\ge1$. Then $\gamma=\gamma_1\cat\gamma'$ for some $\gamma'$ of length $k-1$, and
$t=f_\gamma(u)=f_{\gamma_1}\big(f_{\gamma'}(u)\big)$.
If $t$ is represented by an infinite word $\eps=\eps_1\cat\eps'$, then for any integer $n$,
$u_{n+1,\eps}=f_{n+1,\eps}(1)=f_{\eps_1}\big(f_{n,\eps'}(1)\big)= f_{\eps_1}(u_{n,\eps'})$. It follows that $u_{n,\eps'}$ has a limit $v$ such that
$f_{\eps_1}(v)=t$. Thus $\eps'$ represents $v$.

 If $t>0$ (resp. $t<0$) we obviously have $\gamma_1=+=\eps_1$ (resp. $\gamma_1=-=\eps_1)$. If $t=0$, whether $\eps_1$ is the sign $+$ or the sign $-$, we have
$v=-\infty$ and therefore, even if it means modifying the sign $\eps_1$, we can always assume $\gamma_1=\eps_1$. We then have
$f_{\eps_1}(v)=t=f_{\gamma_1}\big(f_{\gamma'}(u)\big)$ and, since $\eps_1=\gamma_1$, we have $f_{\gamma'}(u)=v$ and therefore $v\in X$. As $\gamma'$ has length $k-1$ and $v$ is represented by the word $\eps'$ we obtain a contradiction with the induction hypothesis.

The set of representable $t$ is therefore the complement of $X$.

\smallskip
To complete the proof of Proposition~\ref{deux}, passing to the complement, it remains to prove that $X$ is an open and dense subset of $\r$.
The density of $X$ has been proved in Lemma~\ref{l3}, so let's prove:

\paragraph{3. $X$ is open in $\r$.}
The proof goes through another description of $X$. More precisely let us denote by $Y$ the union of the intervals $]m,+\infty]$, $[-\infty,-m[$ and
$f_\gamma(]-1/m,1/m[)$ for $\gamma$ finite word; we are going to prove $X=Y$.

Indeed $]-1/m,1/m[$ is the union of $]-1/m,0]=f_-([-\infty,-m[)$ and $[0,1/m [=f_+([-\infty,-m[)$.
As $[-\infty,-m[=f_-(]m,+\infty])$ and $]m,+\infty]=f_\emptyset(]m,+\infty])$, we conclude that $Y$ is the union of the images of $]m,+\infty]$ by $f_\emptyset$, $f_-$, $f_{\gamma\cat+\cat-}$ and $f_{\gamma\cat-\cat-}$ for $\gamma$ a finite word. These intervals are, by definition, contained in $X$, hence $Y\subset X$.

Moreover, noting that $f_+(]m,+\infty])=]m,+\infty]$, this description of $Y$ proves $f_+(Y)\subset Y$ and $f_-( Y)\subset Y$. It follows that $f_\gamma(Y)\subset Y$ for any finite word $\gamma$, and therefore $f_\gamma(]m,+\infty])\subset Y$, i.e. $X\subset Y$.

  The equality of $X$ and $Y$ follows.

\smallskip
Then the $f_\gamma(]-1/m,1/m[)$ are open intervals therefore open subsets of $\r$. And, although they are not open intervals, $]m,+\infty]$ and $[-\infty,-m[$ are open for the topology of $\r$ (as their complements are closed intervals). By union we deduce that $Y$ (and therefore $X$) is an open set of $\r$. This completes the proof.
\end{proof}

\section{The case $a>e$}

As in the previous case we will be interested in a fixed point not of $f_+$ but of $f_-$. We therefore consider the function $h$ defined by
$h(x)=x-f_-(x)=x+e^{ax}$. This function is strictly increasing from $-\infty$ to $+\infty$, so it vanishes at a single point $m$ (not the point $m$ of the previous section). We then have
$m=-e^{am}=f_-(m)$; $m$ is therefore the unique fixed point of $f_-$.

Also $f_-'(m)=-ae^{am}=am$ ; let us prove $f_-'(m)<-1$, that is to say $am<-1$, i.e. $m<-1/a$. This amounts to proving $h(-1/a)>0$, i.e.
$-1/a+1/e>0$, or equivalently $a>e$, which is true by hypothesis. Thus $m$ is a repulsive fixed point for $f_-$.

Consider then the infinite word $\delta'$ formed only of signs $-$. We therefore have $u_{n+1,\delta'}=f_-(u_{n,\delta'})$, consequently the only possible limit for the sequence of $u_{n,\delta'}$ is the only fixed point of $f_-$ which is $m$. However, since $m$ is repulsive, this requires the sequence of $u_{n,\delta'}$ to be stationary, which is in contradiction with $u_{0,\delta'}=1\not=m$ and $f_-$ is injective.

Thus $\delta'$ is an example of an infinite word for which the sequence of $u_{n,\delta'}$ diverges; hence, $\delta'$ does not represent any element of $\r$.

Moreover  Lemma~\ref{l0} ensures that $I(\delta')$ is not reduced to a point. It follows that $I^o(\delta')$ is not empty and, as $\delta'$ does not represent any element of $\r$,  Lemma~\ref{l1'} shows that no elements of $I^o(\delta')$ can be represented.

As  Proposition~\ref{un} ensures the existence of sequences $\eps$ for which the sequence of $u_{n,\eps}$ is convergent and therefore the existence of representable elements, these last two remarks prove Proposition~\ref{trois}.

\section{Looking for suitable elements}\label{suitable}

According to the above, these elements are to be searched in the interval $]1/e,e]$. Moreover  Proposition~\ref{A} says that these are the $a$ for which all the $I(\eps)$ are reduced to a single element. The fundamental tool is then the following lemma.

\begin{lem}\label{l6}
Suppose there is a function $\f$ from $\R$ to $\R$ such that
\begin{itemize}
\item[1.] $\f$ is continuous, even and strictly positive;
\item[2.]$\displaystyle\int_{-\infty}^{+\infty}\frac{dt}{\f(t)} < +\infty$;
\item[3.]$\forall x\in\R\quad \f(e^{ax})>ae^{ax}\f(x)$\quad except for a finite number of $x$ (for which there is a tie).
\end{itemize}
Then, for any infinite word $\eps$, the interval $I(\eps)$ is reduced to a single element.
\end{lem}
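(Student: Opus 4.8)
The plan is to turn $\f$ into a finite ``length'' on $\r$ with respect to which both $f_+$ and $f_-$ are strict contractions, and then to rule out any $I(\eps)$ of positive length. First I would set $\mu([u,v]):=\int_u^v \frac{dt}{\f(t)}$, a finite Borel measure on $\r$: condition~2 makes $L:=\mu(\r)$ finite, and condition~1 makes $\mu$ non-atomic with $\mu(J)>0$ for every non-degenerate interval $J$, so that $(u,v)\mapsto\mu([u,v])$ is continuous on the compact square $\r^2$, and ``$I(\eps)$ is a singleton'' is equivalent to ``$\mu(I(\eps))=0$''. Next I would compute, via the substitution $t=e^{ax}$ and using that $\f$ is even (so $\f(-e^{ax})=\f(e^{ax})$) together with $|f_+'(x)|=|f_-'(x)|=ae^{ax}$, that for every interval $[u,v]$ (endpoints in $\r$ allowed) both quantities $\mu([u,v])-\mu(f_+([u,v]))$ and $\mu([u,v])-\mu(f_-([u,v]))$ equal $\int_u^v\psi(x)\,dx$, where $\psi(x):=\frac{1}{\f(x)}-\frac{ae^{ax}}{\f(e^{ax})}$. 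Condition~3 is precisely the assertion that $\psi\ge 0$ with at most finitely many zeros; since $\psi$ is Lebesgue-integrable on $\R$ (again by condition~2), $\nu(A):=\int_A\psi\,dt$ defines a finite measure, absolutely continuous with respect to Lebesgue measure, with $\nu(J)>0$ for every non-degenerate interval $J$. In short, $\mu(f_\pm(J))=\mu(J)-\nu(J)$, with $\nu(J)>0$ unless $J$ is degenerate.

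The quantitative heart of the argument is the estimate: for each $\l>0$ there is $c(\l)>0$ with $\nu([u,v])\ge c(\l)$ whenever $\mu([u,v])\ge\l$. I would prove this by compactness: the set $K_\l:=\{(u,v)\in\r^2:\ u\le v,\ \mu([u,v])\ge\l\}$ is compact (it is closed, by continuity of $(u,v)\mapsto\mu([u,v])$), the map $(u,v)\mapsto\nu([u,v])$ is continuous on it, hence attains a minimum $c(\l)$; and $c(\l)>0$ because any minimizer $(u_0,v_0)$ has $\mu([u_0,v_0])\ge\l>0$, hence $u_0<v_0$, hence $\nu([u_0,v_0])>0$.

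With these in hand I would argue by contradiction: assume $I(\eps)$ is not reduced to a point for some infinite word $\eps$, and set $\l:=\mu(I(\eps))>0$. Writing $\eps^{(k)}$ for the $k$-th shift of $\eps$, the relation $I(\gamma\cat\eps')=f_\gamma(I(\eps'))$ recalled earlier gives $I(\eps^{(k)})=f_{\eps_{k+1}}\big(I(\eps^{(k+1)})\big)$, whence $\mu\big(I(\eps^{(k)})\big)=\mu\big(I(\eps^{(k+1)})\big)-\nu\big(I(\eps^{(k+1)})\big)$. Thus $k\mapsto\mu\big(I(\eps^{(k)})\big)$ is non-decreasing; being bounded above by $L$ it converges, so its consecutive differences $\nu\big(I(\eps^{(k+1)})\big)$ tend to $0$. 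But $\mu\big(I(\eps^{(k+1)})\big)\ge\mu(I(\eps))=\l$ for all $k$ by monotonicity, so the estimate above forces $\nu\big(I(\eps^{(k+1)})\big)\ge c(\l)>0$ for all $k$ --- a contradiction. Hence every $I(\eps)$ is a single point.

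I expect the main obstacle to be exactly the gap between the easy per-step strict contraction $\mu(f_\pm(J))<\mu(J)$ and a contraction that survives the infinite composition: the strict inequality by itself does not prevent $\mu(I(n,\eps))$ from decreasing to a positive limit. The two ideas that close this gap are (i) to measure the loss not along $I(n,\eps)$ but along the shifted intervals $I(\eps^{(k)})$, whose $\mu$-size stays $\ge\l$, and (ii) to convert ``$\nu$ charges non-degenerate intervals'' into a \emph{uniform} lower bound via compactness of $\r$. The only genuinely routine points to check are the change-of-variables identity at the infinite endpoints (using $e^{-\infty}=0$, $e^{+\infty}=+\infty$) and the integrability of $\psi$ near $\pm\infty$, both immediate from condition~2.
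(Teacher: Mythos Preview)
Your proof is correct. Both you and the paper introduce the same measure $\mu(J)=\int_J dt/\f(t)$ and verify, via the substitution $t=e^{ax}$ together with the parity of $\f$, that $f_+$ and $f_-$ strictly shrink $\mu$-length on non-degenerate intervals. The difference lies in how the per-step strict contraction is leveraged to force $\mu(I(\eps))=0$. The paper exploits the pairwise disjointness of the open intervals $I^o(\eps)$ over all infinite words (Lemma~\ref{l1}): since $\mu(\r)<\infty$, only finitely many of them can have measure exceeding any fixed $\eta>0$, so some $I^o(\eps')$ has maximal measure; writing $\eps'=\eps'_1\cat\eps''$ gives $I^o(\eps')=f_{\eps'_1}(I^o(\eps''))$, and strict contraction then forces this maximum to be zero. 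Your route avoids the disjointness lemma altogether: you promote strict contraction to a \emph{uniform} lower bound $\nu(J)\ge c(\l)>0$ on the compact set $\{(u,v):\mu([u,v])\ge\l\}\subset\r^2$, and then telescope along the shifts $I(\eps^{(k)})$ to contradict $\mu\le L$. The paper's argument is a little shorter and reuses structure already in hand; yours is more self-contained and would transplant to settings where no analogue of Lemma~\ref{l1} is available.
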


\begin{proof}
It is therefore a matter of proving that the intervals $I(\eps)$ are of zero length. For this we will argue as in the proof of Lemma~\ref{l3} but modifying the distance on $\r$ so that the maps $f_+$ and $f_-$ become contracting on all $\r$.

More precisely, for $I$ an interval with endpoints $x\leq y$ we will replace the length of $I$ by the measure $m(I)=\displaystyle \int_x^y\frac{dt}{\f( t)}$
(with $m(\emptyset)=0$). Assumption (2) ensures that $\r$ is of finite measure. The parity of $\f$ ensures that an interval $I$ and its symmetric $-I$ have the same measure.

The key point is then that hypothesis (3) ensures that the image by $f_+$ or $f_-$ of an interval $I$ has a measure less than that of $I$ with strict inequality if $I$ has nonempty interior.

This is obvious if $x=y$, i.e. if $I$ is a singleton. Otherwise, $x<y$ and the image of $I$ by $f_+$ is an interval $J$ with endpoints $e^{ax}$ and $e^{ay}$ and the image of $I$ by $f_-$ is the symmetric $-J$. We then have:

\[m\big(f_+(I)\big)=m\big(f_-(I)\big)=\int_{e^{ax}}^{e^{ay}}\frac{dt}{\f(t)}=\int_x^y\frac{ae^{au}du}{\f(e^{au})}
  \]
by making the change of variable $t=e^{au}$.

Assumption (3) allows us to strictly lower bound the last integral by 
$\displaystyle \int_x^y\frac{du}{\f(u)}=m(I)$.

Consider then all the open intervals $I^o(\eps)$. They are pairwise disjoint according to  Lemma~\ref{l1}. It follows that if we take a finite number of them, the sum of their measures is bounded by $m(\r)$ which is finite. In particular, for every $\eta>0$, there is only a finite number of them of measure greater than $\eta$ and, consequently,  (at least) one of these intervals has maximum measure.

Let $\eps'$ be an infinite word such that $I^o(\eps')$ has maximum measure. By writing $\eps'$ as the concatenation of the first sign $\eps'_1$ and the word
$\eps''$ formed by the other signs we obtain $I^o(\eps')=f_{\eps'_1}\big(I^o(\eps'')\big)$. The previous key point then ensures that $I^o(\eps'')$ has measure zero or strictly greater than that of $I^o(\eps')$. The last case is excluded because of the maximal nature of the measure of $I^o(\eps')$. It follows that $I^o(\eps'')$ and therefore $I^o(\eps')$ have measure zero, which implies that all $I^o(\eps)$ have measure zero and therefore all
$I(\eps)$ are reduced to a single element.
\end{proof}

Our goal will then be to exhibit explicit functions $\f$ satisfying points 1 and 2 in the lemma and to find $a$ such that point 3 holds, to conclude that these $a$ are suitable.

In fact, for point 3, we will only write the proof of

$(3')\ \forall x\in\R\quad \f(e^{ax})\ge ae^{ax}\f(x)$

the fact that equality can only occur for a finite number of values of $x$ being easy to establish.

\medskip\noindent
{\bf Example 1.}\quad $\f(t)=1+\l t^2$ \ , $\l>0$.

The validity of points 1 and 2 is obvious, let's look for couples $(\l,a)$ for which point 3' holds (although possible we will not try to describe them all but only to show enough of them to contribute with example 2 to the proof of  Proposition~\ref{quatre}).

It is therefore a matter of ensuring, for any real number $x$, the inequality $ 1+\l e^{2ax}\ge a e^{ax}(1+\l x^2)$. Multiplying by $e^{-ax}$, this is equivalent to
 $e^{-ax}+\l e^{ax}\ge a+\l a x^2$.

By introducing the variable $y=ax$, it is therefore a matter of ensuring that the function $F_{\l,a}$ defined by
$$F_{\l,a}(y)=e^{-y}+\l e^y-a-\l y^2/a$$
remains positive on all $\R$.

\smallskip\noindent
{\bf A simple case.} \quad $\l=1$ , $a=1$

The question is whether the quantity  $e^{-y}+e^y-1-y^2$ remains positive. By using the series expansion of $e^y$ and $e^{-y}$ we immediately see that it remains greater than $1$.

We have therefore proved that $1$ is a suitable real number; for the initial problem the answers to $Q_1$ and $Q_2$ are therefore always positive.

\smallskip\noindent
{\bf General case.}

In order to ensure the positivity of $F_{\l,a}$ we will require:

\noindent (a): $F_{\l,a}$ is convex.
 
There is a real number $t$ for which

\noindent (b): $F_{\l,a}(t)=0$ \ and \ $F'_{\l,a}(t)=0$.

These conditions will prove that $F_{\l,a}$ reaches a minimum at the point $t$ which is zero and therefore $F_{\l,a}$ is positive.

Condition (a) is written \ $F''_{\l,a}$ positive, or equivalently \ $e^{-y}+\l e^y-2\l/a\ge0$.

The quantity $e^{-y}+\l e^y-2\l/a$ reaches a minimum when $e^{-y}=\l e^y$, i.e. $e^{-y }=\sqrt\l$. This minimum is worth $2\sqrt\l-2\l/a$, so we see that condition (a) is equivalent to:
\[{\rm (a')} : \l\le a^2.
  \]

Condition (b) writes:
\[e^{-t}+\l e^t=a+\l t^2/a \quad {\rm and} \quad -e^{-t}+\l e^t=2\l t/a.
  \]

We then note that this condition is verified if we take $t$ in $]-2,2[$ and
\[a=(2+t)e^{-t} \quad {\rm and} \quad \l=\dfrac{(2+t)e^{-2t}}{2-t}
  \]

In order to satisfy condition (a'), one must also require:
$$\dfrac{(2+t)e^{-2t}}{2-t}\le(2+t)^2e^{-2t}$$
i.e. \ $1\le 4-t^2$ \ or $-\sqrt3\le t\le\sqrt3$ .

Hence, for $t\in[-\sqrt3,+\sqrt3]$, the real number $a=(2+t)e^{-t}$ is suitable.

When $t$ varies from $-\sqrt3$ to $+\sqrt3$ the quantity $(2+t)e^{-t}$ increases from $(2-\sqrt3) e^{\sqrt3}$ until at $e$ (reached for $t=-1$) then decreases to
$(2+\sqrt3)e^{-\sqrt3}$.

We have
\[(2-\sqrt 3) e^{\sqrt 3} \sim 1.51\dots \quad{\rm and} \quad (2+\sqrt3) e^{-\sqrt 3} \sim 0.66\dots
  \]

In conclusion all the real numbers of $[(2+\sqrt3)e^{-\sqrt3},e]$ are suitable.
We will retain in particular that all the real numbers of $[1,e]$ are suitable.

\medskip\noindent
{\bf Remark.} For the sake of simplicity we have limited the study to the case $F_{\l,a}$ convex. This is obviously not necessary. We can also prove, by keeping the same values for $\l$ and $a$ as functions of $t$, that $F_{\l,a}$ remains positive for $|t|\le 2\tanh |t|$. This proves that the elements of $[M,e]$ are suitable with $M\sim 0.577\dots$.
\medskip\noindent

{\bf Example 2.}\quad $\f(t)=\max\big(1,(|at|)^\nu\big)$ \ , $\nu>1$,\ $a>0$ .

The validity of points 1 and 2 is obvious, let us look for pairs $(\nu,a)$ for which point 3' holds.

It is therefore a question of ensuring \quad $\max(1,a^\nu e^{\nu ax})\ge a e^{ax}\max\big(1,(a|x|)^\nu\big)$\quad for any real number $x$.
  By introducing the variable $y=ax$ we are reduced to proving \quad$\max(1,a^\nu e^{\nu y})\ge a e^y\max(1,|y|)^\nu)$ .

This reduction is the conjunction of the two relations:
$$\max(1,a^\nu e^{\nu y})\ge ae^y \quad {\rm and} \quad \max(1,a^\nu e^{\nu y}) \ge ae^y|y|^\nu\ .$$

As the first is obviously always realized (since $\nu>1$), it suffices to prove the second.
To do this, it suffices to require:

{\bf 1.}\quad $1\ge ae^y|y|^\nu$\quad if $y<0$ \quad and \quad
{\bf 2.}\quad $a^\nu e^{\nu y}\ge ae^y|y|^\nu$\quad if $y>0$.

\smallskip
 Condition {\bf1} translates to: \quad $a\le e^{-y}(-y)^{-\nu}$ \quad if $y<0$.

 Condition {\bf2} translates to: \quad $a^{\nu-1}\ge e^{(1-\nu)y}y^\nu$ \quad if $y>0$.

The study of the derivative of $e^{-y}(-y)^{-\nu}$ for $y<0$ shows that this quantity reaches a strict minimum for $y=-\nu$ which equals $ (e/\nu)^\nu$.  Condition {\bf1} is therefore translated by $a\le(e/\nu)^\nu$ or equivalently $\nu a^{1/\nu}\le e$.

The study of the derivative of $e^{(1-\nu)y}y^\nu$ for $y>0$ shows that this quantity reaches a strict maximum for $y=\frac\nu{\nu- 1}$ which is
$e^{-\nu}\big(\frac\nu{\nu-1}\big)^\nu$. By setting $\nu'=\frac\nu{\nu-1}$, the condition {\bf 2} therefore results in $a\ge (\nu'/e)^{\nu'}$ or equivalently
$\nu'a^{-1/\nu'}\le e$.

Consider the special case where $\nu=1+1/a$, so we have $\nu'=1+a$. We then have $\nu a^{1/\nu}=\nu'a^{-1/\nu'}= f(a)$ where we have defined the function $f$ by
$f(x)=(x+1) x^{-1/x+1}$.

It follows that the conditions {\bf 1} and {\bf2} are fulfilled as soon as $f(a)\le e$.

The study of the logarithm of $f$ shows that between $0$ and $1$ $f$ decreases strictly from $+\infty$ to $2$ and that between $1$ and $+\infty$ $f$ increases strictly from $2$ to $+\infty$. As $2<e$, we conclude that the equation $f(x)=e$ has a unique solution $A$ between $0$ and $1$ and a unique solution $B$ between $1$ and $+\infty$. Furthermore the condition $f(a)\le e$ is equivalent to $a\in[A,B]$. The elements of $[A,B]$ are therefore suitable real numbers.

\smallskip
Therefore, since $1\in[A,B]$ and, as we have seen, all the real numbers of $[1,e]$ are suitable, we conclude that all the real numbers of $[A,e]$ are suitable.

This completes the proof of Proposition \ref{quatre}.

\medskip\noindent
{\bf Remarks.} For the sake of simplicity, we have limited the study to the case $\nu=1+1/a$. This is obviously not necessary but one can show that, even if one does not make this restriction, the method does not provide other suitable elements.

We can verify that we have $f(1/x)=f(x)$. It follows that we have $B=1/A$.

We have \quad
$A\sim 0.3942\dots$ \quad and \quad $B\sim 2.5367\dots$ .

\section*{Acknowledgements.}

We would like to thank all the colleagues and friends who have enabled us to improve this text through their remarks and comments, and especially Francis Lazarus who also provided the English translation of our initial version written in French.

\bibliographystyle{unsrt}
\bibliography{Lazarus}
\end{document}